\documentclass[a4paper,11pt]{amsart}

\usepackage{multicol,color}
\usepackage{amsmath,latexsym,amsbsy,amssymb}
\usepackage{enumerate}
\usepackage{amsthm}
\usepackage[latin1]{inputenc}

\newtheorem{Theorem}{Theorem}[section]
\newtheorem{Definition}[Theorem]{Definition}

\newtheorem{Lemma}[Theorem]{Lemma} 
\newtheorem{Remark}[Theorem]{Remark}

\numberwithin{equation}{section}

\newcommand{\R}{{\mathbb R}}

\linespread{1.2}
\setlength{\topmargin}{-0.2in}
\setlength{\oddsidemargin}{0.1in}
\setlength{\evensidemargin}{0.1in}
\setlength{\textwidth}{6.61in}
\setlength{\rightmargin}{0.0in}
\setlength{\leftmargin}{0in}
\setlength{\textheight}{9.30in}

%Davide's macros

\begin{document}

\title[Weak Sharp Solutions for Variational Inequalities]{Finite Convergence Analysis and Weak Sharp Solutions for Variational Inequalities}
%\subtitle{Do you have a subtitle?\\ If so, write it here}

%\titlerunning{}% if too long for running head
\author[S. Al-Homidan, Q. H. Ansari, L. V. Nguyen]{Suliman Al-Homidan, Qamrul Hasan Ansari, Luong Van Nguyen}
\address[Suliman Al-Homidan]{Department of Mathematics \& Statistics, King Fahd University
of Petroleum \& Minerals, Dhahran, Saudi Arabia }

\email{homidan@kfupm.edu.sa}

\address[Qamrul Hasan Ansari]{Department of Mathematics, Aligarh Muslim University,
Aligarh 202 002, India, and Department of Mathematics \& Statistics, King Fahd University
of Petroleum \& Minerals, Dhahran, Saudi Arabia }

\email{qhansari@gmail.com}

\address[Luong V. Nguyen]{Institute of Mathematics, Polish Academy of Sciences, ul. \'Sniadeckich 8, 00 - 656  Warsaw, Poland \\Telephone: +48 22 5228 235 }

\email{vnguyen@impan.pl; luongdu@gmail.com}

% The correct dates will be entered by the editor

\keywords{Variational inequalities; Weak sharp solutions;
Gap functions; Finite convergence analysis;
Exact proximal point method; Gradient projection method.}

% \PACS{PACS code1 \and PACS code2 \and more}

\subjclass[2000]{49J40; 90C33; 49J53; 47J20; 90C25; 47H04}
\maketitle

%AAAAAAAAAAAAAAAAAAAAAAAAAAAAAAAAAAAAAAAAAAAAAAAAAAAAAAAAAAAAAAAAAAAAAAAAAAAAAAAAAAAAAAAAAAAAAAAA
%AAAAAAAAAAAAAAAAAAAAAAAAAAAAAAAAAAAAAAAAAAAAAAAAAAAAAAAAAAAAAAAAAAAAAAAAAAAAAAAAAAAAAAAAAAAAAAAA

\begin{abstract}
In this paper, we study the weak sharpness of the solution set of variational inequality
problem (in short, VIP) and the finite convergence property of the sequence generated by 
some algorithm for finding the solutions of VIP. 
In particular, we give some characterizations of weak sharpness of the solution set of VIP 
without considering the primal or dual gap function. 
We establish an abstract result on the finite convergence property
for a sequence generated by some iterative methods.
We then apply such abstract result to discuss the finite termination property of the sequence
generated by proximal point method, exact proximal point method and gradient projection method.
We also give an estimate on the number of iterates by which the sequence converges to a
solution of the VIP.
\end{abstract}

%1111111111111111111111111111111111111111111111111111111111111111111111111111111111111111111111
%1111111111111111111111111111111111111111111111111111111111111111111111111111111111111111111111

\section{Introduction}

%1111111111111111111111111111111111111111111111111111111111111111111111111111111111111111111111
%1111111111111111111111111111111111111111111111111111111111111111111111111111111111111111111111

Burke and Ferris \cite{BF93} introduced the concept of weak sharp solutions for an optimization problem
in terms of a gap function and gave its characterization in terms of a geometric condition.
Marcotte and Zhu \cite{MZ98} exploited that geometric condition to introduce the concept
of weak sharp solutions for variational inequalities. They also gave a characterization of
weak sharp solutions in terms of a dual gap function for variational inequalities.
It is further studied by Wu and Wu \cite{WW04}.
Recently, Liu and Wu \cite{LW15} studied weak sharp solutions for the variational inequality
in terms of its primal gap function. They also characterized the weak sharpness of the solution set
of the variational inequality problem in terms of the primal gap function.
They also presented some finite convergence results of algorithms for the VIP.
One of the most important features to study the weak sharpness of the solution set of the variational
inequality problem is that it provides the finite convergence property to the sequences generated
by the algorithms for finding the solution of variational inequalities, see, e.g.,
\cite{LW15,MZ98,MX13,MX15,WW04}.

In this paper, we give some characterizations of weak sharp solutions for the VIP without considering the
primal or dual gap function. Our characterizations give some better estimates for the distance
from any point in the underlying space to the
solution set of the VIP than the results obtained by using primal or dual gap function.
We study some abstract results on the finite termination property
for a sequence generated by some iterative methods for finding the solutions of the VIP.
As applications of the abstract results, we discuss the finite termination property of the sequence
generated by the proximal point method, exact proximal point method and gradient projection method.
We also give an estimate on the number of iterates by which the sequence converges to a
solution of the VIP.

%2222222222222222222222222222222222222222222222222222222222222222222222222222222222222222222222
%2222222222222222222222222222222222222222222222222222222222222222222222222222222222222222222222

\section{Preliminaries}

%2222222222222222222222222222222222222222222222222222222222222222222222222222222222222222222222
%2222222222222222222222222222222222222222222222222222222222222222222222222222222222222222222222

Let $H$ be a real Hilbert space whose inner product and norm are denoted by $\langle .,.\rangle$ and
$\|\cdot \|$, respectively. We denote by $\mathbb{B}$ the unit ball in $H$.
For a given set $C$ in $H$, we denote by $\mathrm{int}C$ the interior of $C$ and
by $\mathrm{cl}C$ the closure of $C$. The \textit{polar} $C^{\circ}$ of $C$ is defined by
$$C^{\circ} := \left\{ x^{*} \in H : \langle x^*,x\rangle \le 0 \text{ for all } x \in C \right\}.$$
For a given $x\in H$, the distance from $x$ to $C$ is defined by
$$\mathrm{dist}(x,C) := \inf_{y \in C} \| y-x \|,$$
and the \textit{projection} of $x$ onto $C$ is defined by
$$P_C(x) := \{ y \in C : \| y-x \| = \mathrm{dist}(x,C) \}.$$
It is well-known that $P_{C}(x)$ is a singleton set if $C$ is nonempty closed and convex.
In this case, $P_{C}$ is nonexpansive mapping, that is,
$$ \| P_{C}(x) - P_{C}(y) \| \leq \| x-y \|, \quad \mbox{for all } x,y \in C.$$
Let $X$ be a nonempty closed convex subset of $H$. The \textit{tangent cone}
to $X$ at a point $x\in X$ is defined as
$$T_X(x):= \mathrm{cl}\left( \bigcup_{\lambda >0} \frac{X-x}{\lambda}\right).$$
The \textit{normal cone} to $X$ at $x\in X$ is defined by $N_X(x) := [T_X(x)]^{\circ}$.
In other words,
$$ N_X(x) = \left\{ x^{*} \in H : \langle x^{*}, y-x \rangle \le 0 \mbox{ for all } y \in X \right\}.$$

Let $F : X \to H$ be a mapping. The \textit{variational inequality problem} (in short, VIP) is to find
$x^{*} \in X$ such that
\begin{equation}\label{VIP}
\langle F(x^*),x-x^*\rangle \ge 0, \quad \text{for all } x \in X.
\end{equation}
We denote the solution set of the VIP by $X^*$.
Throughout the paper, we assume that $X^{*}$ is nonempty.
For further details on variational inequalities and their applications, we refer to
\cite{ALM14,FP03} and the references therein.

We often consider the VIP with $F$ satisfying certain monotonicity properties.
Therefore, we recall the following definitions of different kinds of monotonicities.

\begin{Definition}
The mapping $F : X \to H$ is said to be
\begin{itemize}
\item[(a)] \textit{monotone} on $X$ if for any $x,y\in X$,
$$\langle F(x)-F(y),x-y\rangle \ge 0;$$

\item[(b)] \textit{inverse strongly monotone} on $X$ if there exists $\mu >0$ such that for any $x,y\in X$,
$$\langle F(x)-F(y),x-y\rangle \ge \mu ||Fx-Fy||^2;$$

\item[(c)] \textit{pseudomonotone} on $X$ if for any $x,y\in X$,
$$\langle F(x),y-x\rangle \ge 0 \quad\Rightarrow \quad \langle F(y),y-x\rangle \ge 0;$$

\item[(d)] \textit{strongly pseudomonotone} on $X$ if there exists $\mu>0$ such that for any $x,y\in X$,
$$\langle F(x),y-x\rangle \ge 0 \quad\Rightarrow \quad \langle F(y),y-x\rangle \ge \mu ||y-x||^2;$$

\item[(e)] \textit{pseudomonotone}$^{+}$ on $X$ if $F$ is pseudomonotone on $X$ and for any $x,y\in X$,
$$\langle F(x),y-x\rangle \ge 0 \text{ and }\langle F(y),y-x\rangle = 0 \quad \Rightarrow \quad F(x) = F(y).$$
\end{itemize}
\end{Definition}

\begin{Remark}
It is evident that (b) $\Rightarrow$ (a), (a) $\Rightarrow$ (c), (d) $\Rightarrow$ (c) and
(e) $\Rightarrow$ (c). It is also easy to see that property (d) implies that the VIP has at most one solution.
Moreover, if $F$ is pseudomonotone then the solution set of the VIP is closed and convex
(see, e.g., \cite{ALM14,FP03}).
\end{Remark}

%33333333333333333333333333333333333333333333333333333333333333333333333333333333333333333
%33333333333333333333333333333333333333333333333333333333333333333333333333333333333333333

\section{Weak Sharp Solutions}

%33333333333333333333333333333333333333333333333333333333333333333333333333333333333333333
%33333333333333333333333333333333333333333333333333333333333333333333333333333333333333333

Recall the definition of weak sharp solutions for a variational inequality problem in terms of
Marcotte and Zhu \cite{MZ98}.

The solution set $X^*$ of the VIP is \textit{weakly sharp} provided that $F$ satisfies
\begin{equation}\label{WS}
-F(x^{*}) \in \mathrm{int} \left( \bigcap_{x \in X^{*}} \left[ T_X(x) \cap N_{X^*}(x) \right]^{\circ} \right),
\quad \text{for all } x^{*} \in X^{*}.
\end{equation}
Note that if $X^*$ is weakly sharp, then there exists a constant $\alpha >0$ such that
\begin{equation}\label{T2}
\alpha \mathbb{B} \subset F(x^*) + \left[ T_X(x^*) \cap N_{X^*}(x^*) \right]^{\circ},
\quad \text{for each } x^* \in X^*.
\end{equation}
It is equivalent to say that for each $x^* \in X^*$,
\begin{equation}\label{T3}
\langle F(x^*),v\rangle \ge \alpha \| v \|, \quad \text{for all } v \in T_X(x^*) \cap N_{X^*}(x^*),
\end{equation}
(see proof of Theorem 4.1 in \cite{MZ98}).
We call the constant $\alpha$ in (\ref{T2}) or equivalently in (\ref{T3})
the \textit{modulus of the weak sharpness} of $X^*$.

For the VIP, an error bound is an estimate for the distance from any point in $H$ to the solution set $X^*$.
Marcotte and Zhu \cite{MZ98} showed that if $F$ is continuous and pseudomonotone$^+$ on a compact set $X$,
then $X^*$ is weakly sharp if and only if there exists some $\alpha>0$ such that
$$\alpha\mathrm{dist}(x,X^*) \le G(x), \quad \text{for all } x\in X,$$
where $G$ is the dual gap function associated to VIP and  defined by
\begin{eqnarray}\label{Eq:DGF}
G(x) & :=& \max_{z \in X} \langle F(z), x- z \rangle \nonumber\\
     &  =& \langle F(\tilde{y}), x-\tilde{y} \rangle,
\end{eqnarray}
where $\tilde{y}$ is any point in the set $\Lambda (x) := \mbox{arg}\max_{z \in X} \langle F(z), x- z \rangle$.
We note that the pseudomonotonicity$^+$ of $F$ on $X$ implies that $F$ is constant on $X^*$
(see, e.g., Proposition 2 in \cite{LW15}).

Recently, Liu and Wu \cite{LW15} gave an error bound in term of the primal gap function
for the VIP defined by
\begin{equation}\label{Eq:PGF}
\begin{array}{ll}
g(x) & := \max_{z \in X} \langle F(x), x- z \rangle\\
     &  = \langle F(x), x-z \rangle \quad \mbox{for } z \in \Gamma(x),
\end{array}
\end{equation}
where $\Gamma(x) := \{ z \in X : \langle F(x), x-z \rangle = g(x) \}$ for $x \in H$.
They showed that if $F$ is monotone on $X$ and constant on $\Gamma(x^*)$ for some $x^*\in X^*$, $g(x) < +\infty$,
$g$ is G\^ateaux differentiable and locally Lipschitz on $X^*$,
then $X^*$ is weakly sharp if and only if there is some $\alpha >0$ such that
$$\alpha\mathrm{dist}(x,X^*) \le g(x), \quad \text{for all } x\in X.$$
We also note that if $F$ is pseudomonotone on $X$ and constant on $\Gamma(x^*)$ 
for some $x^*\in X^*$ then $F$ is constant on $X^*$ (see Proposiotion 4 in \cite{LW15}).

We give some characterizations of weak sharpness of the solution set of VIP without using 
the dual gap or the primal gap function. 
We note that in \cite{MZ98} (respectively, in \cite{LW15}), the authors gave an error bound 
in term of the dual gap function $G$ (respectively, the primal gap function $g$). 
They therefore needed some more assumptions. 
In fact, Marcotte and Zhu \cite{MZ98} assumed that the set $X$ is compact; 
this implies the continuous differentiability of $G$.  
Liu and Wu \cite{LW15} required that $g$ is G\^ateaux differentiable. 
Our proofs follow the lines in \cite{MZ98} and \cite{LW15} but with some modifications.

\begin{Theorem} \label{TH1}
Let $X$ be a nonempty closed convex subset of $H$ and $F : X \to H$ be continuous on $X$ and pseudomonotone$^+$
on $X$. Let the solution set $X^*$ of the VIP be nonempty. Then $X^*$ is weakly sharp if and only if
there exists a positive constant $\alpha$ such that
\begin{equation}\label{T1}
\langle F(P_{X^*}(x)), x - P_{X^*}(x)\rangle \ge \alpha \, \mathrm{dist}(x,X^*),\quad \text{for all } x \in X.
\end{equation}
\end{Theorem}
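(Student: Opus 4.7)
The proof splits into the two directions, and exploits the fact that, as observed in Proposition 2 of \cite{LW15}, the pseudomonotonicity$^+$ of $F$ forces $F$ to be constant on $X^*$. Denote the common value by $F^*$; note also that pseudomonotonicity of $F$ guarantees that $X^*$ is closed and convex, so the metric projection $P_{X^*}$ is well-defined and single-valued.

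For the necessity, I assume $X^*$ is weakly sharp with modulus $\alpha$, so (\ref{T3}) is in force. Given any $x\in X$, set $\bar x:=P_{X^*}(x)$. Two facts combine: the projection identity yields $x-\bar x \in N_{X^*}(\bar x)$, while convexity of $X$ together with $\bar x\in X$ places $x-\bar x \in T_X(\bar x)$. Plugging $v = x-\bar x$ into (\ref{T3}) at the point $\bar x$ and recalling $F(\bar x)=F^*$ immediately delivers (\ref{T1}).

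The sufficiency is the substantive direction. Assume (\ref{T1}) holds. Fix $x^*\in X^*$ and $v\in T_X(x^*)\cap N_{X^*}(x^*)$; I must verify $\langle F^*,v\rangle\ge \alpha\|v\|$, since then (\ref{T3}) follows and hence so does weak sharpness. As $X$ is convex, I pick $x_k\in X$ and $\lambda_k\downarrow 0$ with $(x_k-x^*)/\lambda_k\to v$. Applying (\ref{T1}) at $x_k$ and using $F(P_{X^*}(x_k))=F^*$ yields
$$\langle F^*,\,x_k-P_{X^*}(x_k)\rangle \;\ge\; \alpha\,\mathrm{dist}(x_k,X^*).$$
Because $P_{X^*}(x_k)\in X$ and $x^*\in X^*$ solves the VIP, $\langle F^*,P_{X^*}(x_k)-x^*\rangle\ge 0$, so the left-hand side is bounded above by $\langle F^*,x_k-x^*\rangle$. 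Dividing by $\lambda_k$ and letting $k\to\infty$, the left-hand side tends to $\langle F^*,v\rangle$, so everything reduces to proving $\mathrm{dist}(x_k,X^*)/\lambda_k\to \|v\|$.

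This last convergence is where the membership $v\in N_{X^*}(x^*)$ must be activated, and I expect it to be the main technical hurdle. The upper bound $\mathrm{dist}(x_k,X^*)/\lambda_k\le \|x_k-x^*\|/\lambda_k\to \|v\|$ is immediate. For the matching lower bound, from $X^*-x^*\subseteq T_{X^*}(x^*)$ and the cone property of the tangent cone I get
$$\frac{\mathrm{dist}(x_k,X^*)}{\lambda_k} \;=\; \frac{\mathrm{dist}(x_k-x^*,\,X^*-x^*)}{\lambda_k} \;\ge\; \mathrm{dist}\!\left(\frac{x_k-x^*}{\lambda_k},\,T_{X^*}(x^*)\right).$$
By the Moreau decomposition for the closed convex cone $T_{X^*}(x^*)$ with polar $N_{X^*}(x^*)$, any element of $N_{X^*}(x^*)$ projects to $0$ onto $T_{X^*}(x^*)$, so $\mathrm{dist}(v,T_{X^*}(x^*))=\|v\|$. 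Combined with the $1$-Lipschitz dependence of $\mathrm{dist}(\cdot,T_{X^*}(x^*))$ and the convergence $(x_k-x^*)/\lambda_k\to v$, this closes the sandwich and finishes the proof.
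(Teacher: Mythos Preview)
Your argument is correct. The necessity direction is identical to the paper's. In the sufficiency direction you take a genuinely different route, so a short comparison is in order.

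The paper keeps the full expression $\langle F(P_{X^*}(x^*+t_kv_k)),\,x^*+t_kv_k-P_{X^*}(x^*+t_kv_k)\rangle$, bounds $\mathrm{dist}(x^*+t_kv_k,X^*)$ from below via the separating hyperplane $H_v=\{x:\langle v,x-x^*\rangle=0\}$, and then passes to the limit by using the identity $x^*=P_{X^*}(x^*+t_kv)$ together with nonexpansiveness of $P_{X^*}$ to prove $v_k+(x^*-P_{X^*}(x^*+t_kv_k))/t_k\to v$; continuity of $F$ is invoked at this point to conclude $\langle F(x^*),v\rangle\ge\alpha\|v\|$, and constancy of $F$ on $X^*$ is used only in the very last line. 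You instead spend the constancy of $F$ on $X^*$ at the outset, which lets you replace $P_{X^*}(x_k)$ by $x^*$ in the inner product via the VIP inequality for $x^*$; the limit on the left then becomes trivial, and you recover the distance lower bound through the inclusion $X^*-x^*\subset T_{X^*}(x^*)$ and the Moreau decomposition. Your route bypasses both the projection-nonexpansiveness computation and the explicit appeal to continuity of $F$ in the sufficiency direction; the paper's route, on the other hand, establishes the inclusion $\alpha\mathbb{B}\subset F(x^*)+[T_X(x^*)\cap N_{X^*}(x^*)]^{\circ}$ at each $x^*$ without yet assuming $F$ constant on $X^*$, which is why the same computation can be reused verbatim in Theorem~\ref{TH2}(b) under the weaker hypothesis that $F$ is merely continuous and constant on $X^*$.
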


\begin{proof}
Assume that $X^*$ is weakly sharp and let $x\in X$. Then, we have
$$x-P_{X^*}(x) \in T_X(P_{X^*}(x)) \cap N_{X^*}(P_{X^*}(x))$$
and
$$\| x-P_{X^*}(x) \| = \mathrm{dist}(x,X^*).$$
Thus, by (\ref{T3}), we have
$$\langle F(P_{X^*}(x)), x - P_{X^*}(x)\rangle \ge \alpha\, \| x-P_{X^*}(x) \| =\alpha \, \mathrm{dist}(x,X^*).$$

Conversely, assume that (\ref{T1}) is satisfied for some $\alpha >0$.
We show that (\ref{T2}) holds.
Let $x^*\in X^*$. It is evident that (\ref{T2}) holds if $T_X(x^*)\cap N_{X^*}(x^*) =\{ {\bf 0} \}$.
We now assume that $T_X(x^*)\cap N_{X^*}(x^*) \ne \{ {\bf 0} \}$.
Let ${\bf 0} \ne v\in T_X(x^*)\cap N_{X^*}(x^*)$. Then for each $y^*\in X^*$, we have
$$\langle v,v\rangle >0 \quad \text{and} \quad \langle v,y^*-x^*\rangle \le 0.$$
This implies that the set $X^*$ is separated from $x^*+v$ by the hyperplane
$$H_v = \{x\in H : \langle v,x-x^*\rangle = 0\}.$$
Since $v\in T_X(x^*)$,  for each positive sequence $\{t_k\}$ converging to $0$,
there exists a sequence $\{v_k\}$ converging to $v$ such that $x^* + t_kv_k\in X$ for sufficiently large $k$.
Since $\langle v,v_k\rangle >0$ for sufficiently large $k$, $x^*+t_kv_k$ lies in the open set
$\{x\in H: \langle v,x-x^*\rangle >0 \}$.
Therefore, for sufficiently large $k$, we have
$$\mathrm{dist} \left( x^*+t_kv_k,X^* \right) \ge \mathrm{dist} \left( x^*+t_kv_k,H_v \right)
= \frac{t_k\langle v,v_k\rangle}{\|v\|}.$$
Then by (\ref{T1}), for sufficiently large $k$, we have
\begin{eqnarray*}
\left\langle F(P_{X^*}(x^*+t_kv_k)), x^*+t_kv_k - P_{X^*}(x^*+t_kv_k) \right\rangle
& \ge & \alpha \mathrm{dist} \left( x^*+t_kv_k,X^* \right) \\
& \ge & \alpha t_k\, \frac{\langle v,v_k\rangle}{\|v\|},
\end{eqnarray*}
or, equivalently,
\begin{equation}\label{T4}
\left\langle F(P_{X^*}(x^*+t_kv_k)), v_k + \frac{x^*- P_{X^*}(x^*+t_kv_k)}{t_k} \right\rangle \ge\alpha\, \frac{\langle v,v_k\rangle}{\|v\|}.
\end{equation}
Since $t_k>0$ and $v\in N_{X^*}(x^*)$, one has $x^* = P_{X^*}(x^*+t_kv)$.
Then, by the nonexpansiveness of the projection mapping, we obtain
\begin{eqnarray*}
\left| \left| v_k + \frac{x^*- P_{X^*}(x^*+t_kv_k)}{t_k} - v\right| \right|
&=& \left| \left| v_k -v+ \frac{P_{X^*}(x^*+t_kv)- P_{X^*}(x^*+t_kv_k)}{t_k} \right| \right| \\
&\le& \| v_k-v \| + \| v-v_k \| \\
&=& 2 \| v_k - v \| \to 0 \quad\text{as } k \to \infty.
\end{eqnarray*}
Thus,
$$v_k + \frac{x^*- P_{X^*}(x^*+t_kv_k)}{t_k} \to v \quad\text{as } k \to \infty.$$
Taking the limit as $k\to\infty$ in both sides of (\ref{T4}) and using the continuity of $F$ and $P_{X^*}$,
we obtain
$$\langle F(x^*),v\rangle \ge \alpha \| v \|.$$
It follows that
$$\alpha \mathbb{B} \subseteq F(x^*) + [T_X(x^*) \cap N_{X^*}(x^*)]^o.$$
Since $F$ is pseudomonotone$^+$ on $X$, so it is constant on $X^*$.
This together with the latter inclusion imply that $X^*$ is weakly sharp.

\end{proof}

\begin{Theorem}\label{TH2}
Let $X$ be a nonempty, closed and convex subset of a Hilbert space $H$
and $F : X \to H$ be a mapping.
Assume that the solution set $X^*$ of the VIP is nonempty, closed and convex.
\begin{itemize}
\item[{\rm (a)}] If $X^*$ is weakly sharp and $F$ is monotone,
then there exists a positive constant $\alpha >0$ such that
\begin{equation}\label{T5}
\langle F(x), x - P_{X^*}(x)\rangle \ge \alpha \, \mathrm{dist}(x,X^*), \quad \text{for all } x \in X.
\end{equation}
\item[{\rm (b)}] If $F$ is constant on $X^*$ and continuous on $X$ and (\ref{T5}) holds for some $\alpha >0$,
then $X^*$ is weakly sharp.
\end{itemize}
\end{Theorem}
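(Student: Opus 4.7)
My plan is to treat the two implications separately, using the equivalent characterization (\ref{T3}) of weak sharpness as the bridge. The key intuition is that (\ref{T3}), namely $\langle F(x^*), v\rangle \ge \alpha\|v\|$ for $v \in T_X(x^*)\cap N_{X^*}(x^*)$, is already very close to (\ref{T5}): evaluating at $x^* = P_{X^*}(x)$ and $v = x - P_{X^*}(x)$ produces the right-hand side $\alpha\,\mathrm{dist}(x,X^*)$ immediately, so monotonicity is all that is needed to move the $F$-evaluation from $P_{X^*}(x)$ to $x$ in the forward direction; in the converse direction, a limiting argument essentially identical in structure to the second half of the proof of Theorem \ref{TH1} will recover (\ref{T3}) from (\ref{T5}).

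For part (a), I would fix $x \in X$, set $v := x - P_{X^*}(x)$, and check that $v \in T_X(P_{X^*}(x)) \cap N_{X^*}(P_{X^*}(x))$: the tangent-cone membership is immediate since $x \in X$, and the normal-cone membership is the standard projection characterization onto the closed convex set $X^*$. Invoking (\ref{T3}) then yields $\langle F(P_{X^*}(x)), x - P_{X^*}(x)\rangle \ge \alpha\,\mathrm{dist}(x,X^*)$, and adding the monotonicity inequality $\langle F(x) - F(P_{X^*}(x)), x - P_{X^*}(x)\rangle \ge 0$ gives (\ref{T5}).

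For part (b), I would mirror the converse argument of Theorem \ref{TH1} with (\ref{T5}) in place of (\ref{T1}). Fix $x^* \in X^*$ and a nonzero $v \in T_X(x^*) \cap N_{X^*}(x^*)$, choose $t_k \downarrow 0$ and $v_k \to v$ with $x^* + t_k v_k \in X$, and reuse the hyperplane-separation estimate $\mathrm{dist}(x^*+t_kv_k,X^*) \ge t_k\langle v,v_k\rangle/\|v\|$. Applying (\ref{T5}) at $x = x^*+t_kv_k$, dividing by $t_k$, and letting $k\to\infty$ should yield $\langle F(x^*), v\rangle \ge \alpha\|v\|$, that is (\ref{T3}). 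Together with the hypothesis that $F$ is constant on $X^*$, this pointwise estimate upgrades to the interior condition (\ref{WS}) defining weak sharpness. A small simplification over Theorem \ref{TH1} is that here $F$ is evaluated at $x^* + t_k v_k$ itself, so continuity of $F$ alone handles that factor's limit without any appeal to constancy of $F \circ P_{X^*}$.

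The main obstacle is the passage to the limit in (b), specifically verifying that
$$v_k + \frac{x^* - P_{X^*}(x^* + t_k v_k)}{t_k} \to v$$
in norm as $k\to\infty$. This is the same delicate computation as in Theorem \ref{TH1}: it rests on the identity $P_{X^*}(x^* + t_k v) = x^*$, which holds because $v \in N_{X^*}(x^*)$ and $t_k > 0$, combined with the nonexpansiveness of $P_{X^*}$. Once this limit is available, the remaining steps in both parts are essentially bookkeeping.
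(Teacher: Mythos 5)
Your proposal is correct and follows essentially the same route as the paper: part (a) is the projection-plus-monotonicity argument verbatim, and part (b) is the same limiting argument borrowed from the converse direction of Theorem \ref{TH1}, including the hyperplane estimate, the identity $P_{X^*}(x^*+t_kv)=x^*$, and the final appeal to constancy of $F$ on $X^*$. Your observation that continuity of $F$ alone suffices for the limit of the $F$-factor (since it is evaluated at $x^*+t_kv_k$ rather than at its projection) matches what the paper implicitly uses.
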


\begin{proof}
(a)\ Since $X^*$ is weakly sharp, there is a constant $\alpha >0$ such that for all $x^*\in X^*$,
$$\left\langle F(x^*), z \right\rangle \ge \alpha \|z\|, \quad \text{for all } z\in T_X(x^*) \cap N_{X^*}(x^*).$$
For $x\in X$, we have
$$x-P_{X^*}(x) \in T_X(P_{X^*}(x)) \cap N_{X^*}(P_{X^*}(x)),$$
and
$$\|x-P_{X^*}(x)\| = \mathrm{dist}(x,X^*).$$
Thus,
\begin{equation}\label{T6}
\left\langle F(P_{X^*}(x)), x - P_{X^*}(x) \right\rangle
\ge \alpha \, \|x - P_{X^*}(x)\| = \alpha \, \mathrm{dist}(x,X^*).
\end{equation}
Since $F$ is monotone, we have
$$ \left\langle F(P_{X^*}(x)), x - P_{X^*}(x) \right\rangle
\le - \left\langle F(x), P_{X^*}(x) - x \right\rangle.$$
Combining this with the inequality (\ref{T6}), we get
$$\alpha \, \mathrm{dist}(x,X^*) \le \left\langle F(x), x - P_{X^*}(x) \right\rangle.$$

(b)\ Let $x^*\in X^*$. We first show that
\begin{equation}\label{T7}
\alpha \mathbb{B} \subset F(x^*) + \left[ T_X(x^*) \cap N_{X^*}(x^*) \right]^{\circ}.
\end{equation}
This is obvious if $T_X(x^*) \cap N_{X^*}(x^*) = \{ {\bf 0} \}$.
So, we assume that $T_X(x^*) \cap N_{X^*}(x^*) \ne \{ {\bf 0} \}$.
Let ${\bf 0} \ne v\in T_X(x^*) \cap N_{X^*}(x^*)$.
Then for each positive sequence $\{t_k\}$ converging to $0$,
there is a sequence $\{v_k\}$ converging to $v$ and $x^* + t_kv_k \in X$.
As in proof of Theorem \ref{TH1}, we have, for sufficiently large $k$, that
$$\mathrm{dist} \left( x^*+t_kv_k,X^* \right) \ge  \frac{t_k\langle v,v_k\rangle}{\|v\|}.$$
By (\ref{T5}), we get
\begin{eqnarray*}
\left\langle F(x^*+t_kv_k), x^*+t_kv_k - P_{X^*}(x^*+t_kv_k) \right\rangle
& \ge & \alpha \, \mathrm{dist} \left( x^*+t_kv_k,X^* \right)\\
& \ge & \alpha \, t_k \frac{\langle v,v_k\rangle}{\|v\|},
\end{eqnarray*}
or, equivalently,
$$\left\langle  F(x^*+t_kv_k), v_k + \frac{x^*-P_{X^*}(x^*+t_kv_k) }{t_k}\right\rangle
\ge \alpha\frac{\langle v,v_k\rangle}{\|v\|}.$$
As in proof of Theorem \ref{TH1}, letting $k\to \infty$ in the latter inequality, we get
$$\langle F(x^*), v\rangle \ge \alpha \|v\|.$$
Thus, (\ref{T7}) holds. Since $F$ is constant on $X^*$, we conclude that $X^*$ is weakly sharp.

\end{proof}

%444444444444444444444444444444444444444444444444444444444444444444444444444444444444444444444
%444444444444444444444444444444444444444444444444444444444444444444444444444444444444444444444

\section{Finite Termination Property}

%444444444444444444444444444444444444444444444444444444444444444444444444444444444444444444444
%444444444444444444444444444444444444444444444444444444444444444444444444444444444444444444444

In this section, we study the finite termination property of a sequence generated
by an algorithm for finding the solutions of the VIP. In particular, we first establish
an abstract result on the finite termination of the sequences. We then apply such result
to discuss the finite termination property of proximal point method, exact proximal point method
and gradient projection method.

Throughout this section, we assume that $X$ is a nonempty, closed and convex subset of a Hilbert space $H$
and $F : X \to H$ is a mapping. We mention the following result due to Matsushita and Xu \cite{MX13}
which will be used in the sequel.

\begin{Lemma}{\rm \cite{MX13}}
Let $x \in X$, then
$$\max \left\{ \langle v,-F(x)\rangle : v\in T_X(x), \| v \| \le 1 \right\}
= \left\| P_{T_{X}(x)}(-F(x)) \right\|.$$
\end{Lemma}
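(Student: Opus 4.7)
The plan is to treat this as a standard duality identity for projection onto a closed convex cone. Since $X$ is convex, $T_X(x)$ is a closed convex cone; write $T := T_X(x)$, $u := -F(x)$, and $p := P_T(u)$. The strategy is to first extract two defining properties of $p$ from the variational characterization of the projection and then use them to verify each inequality separately.

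For the preparatory step, I would show that (i) $\langle u - p,\, p\rangle = 0$ and (ii) $u - p \in T^{\circ}$. The variational inequality for the projection gives
$$\langle u - p,\, v - p\rangle \le 0 \quad\text{for all } v \in T.$$
Substituting $v = 2p$ and $v = 0$ (both of which lie in $T$ because $T$ is a cone) yields (i); combining (i) with the displayed inequality then gives (ii).

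For the upper bound, take any $v \in T$ with $\|v\| \le 1$ and decompose
$$\langle v, u\rangle = \langle v,\, u - p\rangle + \langle v, p\rangle \le 0 + \|v\|\,\|p\| \le \|p\|,$$
using (ii) on the first term and Cauchy--Schwarz on the second. For the lower bound, if $p = 0$ then $\langle v, u\rangle \le 0$ for every admissible $v$ and equality is attained at $v = 0$; otherwise, $v_0 := p/\|p\|$ lies in $T$ (cone property), has unit norm, and
$$\langle v_0, u\rangle = \tfrac{1}{\|p\|}\bigl(\langle p,\, u - p\rangle + \langle p, p\rangle\bigr) = \|p\|$$
by (i). Hence the two sides coincide.

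I do not anticipate a genuine obstacle here; the one point to watch is that both properties (i) and (ii) rely crucially on $T_X(x)$ being a cone, not merely closed and convex---the orthogonality $\langle u - p, p\rangle = 0$ would fail for a general closed convex set---so the whole argument hinges on exploiting the cone structure of $T_X(x)$ at the two substitutions $v=2p$ and $v_0 = p/\|p\|$.
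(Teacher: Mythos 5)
Your proof is correct. Note that the paper itself does not prove this lemma at all --- it is quoted verbatim from the reference [MX13] (Matsushita and Xu) --- so there is no in-paper argument to compare against; your contribution is a self-contained verification. The route you take is the standard one: the two facts $\langle u-p,p\rangle=0$ and $u-p\in T^{\circ}$ extracted from the variational characterization of $P_T$ (valid precisely because $T_X(x)$ is a closed convex cone containing $0$, so that $v=0$ and $v=2p$ are admissible test points), followed by Cauchy--Schwarz for the upper bound and the explicit maximizer $p/\|p\|$ for attainment. You correctly handle the degenerate case $p=0$, where the maximum is attained at $v=0$, and you are right that the orthogonality identity is the step that genuinely uses the cone structure rather than mere convexity. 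This is essentially the classical Moreau-decomposition argument and is almost certainly the proof given in the cited source; I see no gap.
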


Our first result of this section is stated as follows.

\begin{Theorem} \label{TH3}
Let $F$ be monotone on $X$, $X^*$ be weakly sharp and $\{x_k\}$ be a sequence in $X$.
Then, $x_k \in X^*$ for all $k$ sufficiently large if and only if
\begin{equation}\label{T12}
\lim_{k\to\infty} P_{T_X (x_k)} \left( -F(x_k) \right) = 0.
\end{equation}
\end{Theorem}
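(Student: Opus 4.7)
The plan is to prove the two implications separately; both are short, and the second is the substantive direction.

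For the forward implication, I would observe that $x_k \in X^*$ is equivalent to $\langle F(x_k), x - x_k\rangle \ge 0$ for all $x \in X$, which says exactly that $-F(x_k) \in N_X(x_k) = [T_X(x_k)]^{\circ}$. For a closed convex cone $K$, one has $P_K(u) = 0$ if and only if $u \in K^{\circ}$, so $P_{T_X(x_k)}(-F(x_k)) = 0$ for all sufficiently large $k$, which trivially yields (\ref{T12}).

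For the converse, the main idea is to exhibit, for each $x_k \notin X^*$, a unit vector in $T_X(x_k)$ on which $-F(x_k)$ has inner product bounded below by the sharpness modulus. First I would note that since $F$ is monotone it is in particular pseudomonotone, so $X^*$ is closed and convex; hence Theorem \ref{TH2}(a) applies and yields a constant $\alpha > 0$ with
\begin{equation*}
\langle F(x), x - P_{X^*}(x)\rangle \ge \alpha\, \mathrm{dist}(x, X^*), \quad \text{for all } x \in X.
\end{equation*}
Set $y_k := P_{X^*}(x_k)$. Since $X$ is convex and $y_k \in X$, the vector $y_k - x_k$ belongs to $T_X(x_k)$, and if $x_k \notin X^*$ the unit vector $v_k := (y_k - x_k)/\|y_k - x_k\|$ lies in $T_X(x_k)$. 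Applying the Matsushita--Xu lemma,
\begin{equation*}
\left\| P_{T_X(x_k)}(-F(x_k)) \right\| \ge \langle v_k, -F(x_k)\rangle = \frac{\langle F(x_k), x_k - y_k\rangle}{\|x_k - y_k\|} \ge \alpha,
\end{equation*}
where the last inequality uses the error bound together with $\|x_k - y_k\| = \mathrm{dist}(x_k, X^*)$. Taking the contrapositive, $\|P_{T_X(x_k)}(-F(x_k))\| < \alpha$ forces $x_k \in X^*$; combined with (\ref{T12}) this gives $x_k \in X^*$ for all $k$ sufficiently large.

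The main obstacle is essentially bookkeeping: one has to verify that the hypotheses of Theorem \ref{TH2}(a) are in force (closedness and convexity of $X^*$), which is handled automatically by monotonicity via the remark following the monotonicity definitions. Once the error bound is available, the argument is just a one-line application of the Matsushita--Xu formula to the distinguished tangent direction $y_k - x_k$.
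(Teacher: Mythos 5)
Your proof is correct and follows essentially the same route as the paper: both reduce the converse to showing that $\bigl\|P_{T_X(x_k)}(-F(x_k))\bigr\|\ge\alpha$ whenever $x_k\notin X^*$, using the unit tangent direction $(y_k-x_k)/\|y_k-x_k\|$ together with the Matsushita--Xu lemma. The only cosmetic difference is that you invoke Theorem \ref{TH2}(a) directly, whereas the paper applies Theorem \ref{TH1} at $y_k=P_{X^*}(x_k)$ and then transfers the bound to $x_k$ via monotonicity --- which is exactly the content of Theorem \ref{TH2}(a) anyway.
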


\begin{proof}
If there exists $k_0$ such that $x_k \in X^*$ for all $k\ge k_0$,
then $-F(x_k) \in N_X(x_k)$ for all $k\ge k_0$. Hence, (\ref{T12}) holds trivially.

Suppose, on the contrary, that the inverse implication does not hold.
Then there exists a subsequence of $\{x_k\}$ which is still denoted by $\{x_k\}$ such that
$x_k \not\in X^*$ for all $k$.
For each $k$, set $y_k := P_{X^*}(x_k) \in X^*$.
Then, we have $x_k - y_k \in T_X(y_k) \cap N_{X^*}(y_k)$.
By Theorem \ref{TH1}, there exists $\alpha > 0$ such that
$$\alpha \| x_k - y_k \| = \alpha \, \mathrm{dist}(x_k,X^*) \le \langle F(y_k), x_k - y_k\rangle,
\quad \mbox{for all } k.$$
Thus, by the monotonicity of $F$, we have
\begin{eqnarray*}
\alpha &\le&  \left\langle F(y_k), \frac{x_k-y_k}{||x_k-y_k||}\right\rangle\\
&=& \left\langle -F(x_k), \frac{x_k-y_k}{||x_k-y_k||}\right\rangle
+\frac{1}{||x_k-y_k||} \langle F(x_k) - F(y_k),y_k - x_k\rangle\\
&\le&  \max \left\{ \langle v,-F(x_k)\rangle : v\in T_X(x_k), \, \|v\| \le 1 \right\} \\
&=& P_{T_X (x_k)} \left( -F(x_k) \right).
\end{eqnarray*}
Letting $k\to \infty$ and using (\ref{T12}), we obtain $\alpha \le 0$ which
contradicts the fact that $\alpha > 0$.
Therefore, $x_k\in X^*$ for all sufficiently large $k$.

\end{proof}

\begin{Remark}
Marcotte and Zhu in \cite{MZ98} obtained the finite termination of an algorithm for the VIP
under assumption that $F$ is pseudomonotone$^+$ and continuous on a compact convex set in $\R^n$.
Xiu ang Zhang \cite{XZ05} improved the result of Marcotte and Zhu \cite{MZ98}
by assuming $F$ is continuous and pseudomonotone.
Zhou and Wang \cite{ZW08} established the finite termination without using any monotonicity property
on the underlying mapping.
All the mentioned results require (\ref{T12}) and the strong convergence of the sequence $\{x_k\}$.
Matsushita and Xu \cite{MX13} relaxed the strong convergence of $\{x_k\}$ by the strong convergence of
$\{P_{X^*}(x_k)\}$ to some point in $X^*$ and, in addition, they assumed that the mapping $F$ is monotone.
Since we do not need the strong convergence of $\{P_{X^*}(x_k)\}$, our result improves the result given by
Matsushita and Xu \cite{MX13}.
\end{Remark}

\subsection{Proximal Point Method}

We now apply our results to study the finite termination property of proximal point method for
solving a monotone variational inequality. We consider the following proximal point algorithm \cite{R76}
for solving the VIP: $x_1 \in H$ and
\begin{equation}\label{T8}
x_{n+1} = J_{\gamma_n}(x_n+e_n), \quad n = 1,2,\dots ,
\end{equation}
where $\gamma_n \in (0,\infty)$, $e_n \in H$ and  $J_{\gamma_n} = (I+\gamma_n T)^{-1}$ with
$T: H \rightrightarrows H$ defined by
\begin{equation}\label{T9}
T(x) : = \left\{
  \begin{array}{lll}
   F(x) + N_X(x), && \quad \text{if } x\in X,\\
    \emptyset,    && \quad \text{otherwise}.
  \end{array}
  \right.
\end{equation}
Note that, for all $x\in H$ and $r>0$, the inclusion
\begin{equation}\label{T10}
x\in x_r + rT(x_r)
\end{equation}
has a unique solution $x_r\in H$ (see, e.g., \cite{R70}).

From (\ref{T9}) and (\ref{T10}), we have
$$x_n + e_n \in \left( I + \gamma_n(F+N_X) \right) (x_{n+1}), \quad \mbox{for all }n. $$
Then
$$x_n + e_n - x_{n+1} - \gamma_n F(x_{n+1}) \in \gamma_n N_X(x_{n+1}) = N_X(x_{n+1}).$$
It follows that
\begin{equation} \label{PA}
\langle x_n + e_n  - x_{n+1} - \gamma_n F(x_{n+1}), y - x_{n+1}\rangle \le 0\qquad \text{for all} \,\,y\in X.
\end{equation}
This is equivalent to
\begin{equation}\label{PPA}
x_{n+1} = P_X(x_n - \gamma_nF(x_{n+1}) + e_n),\quad n=1,2,\dots
\end{equation}

The following result provides the finite convergence property of proximal point method
(\ref{PPA}).

\begin{Theorem} \label{TH4}
Let $F$ be monotone and $\{x_n\}$ be a sequence generated by (\ref{PPA}) such that
$\liminf_{n\to\infty}\gamma_n >0$. Suppose that the following conditions hold:
\begin{itemize}
\item[{\rm (i)}] $\{x_{n+1} - x_n\}$ converges strongly to $0$;
\item[{\rm (ii)}] $\{e_n\}$ converges strongly to $0$.
\end{itemize}
If $X^*$ is weakly sharp, then $x_n \in X^*$ for all sufficiently large $n$.
\end{Theorem}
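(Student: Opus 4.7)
The plan is to reduce this to Theorem \ref{TH3}. Since $F$ is monotone and $X^*$ is weakly sharp, Theorem \ref{TH3} tells us that $x_n \in X^*$ eventually provided we can establish
$$\lim_{n\to\infty} P_{T_X(x_n)}(-F(x_n)) = 0.$$
So the entire task reduces to extracting this limit from the proximal iteration (\ref{PA})--(\ref{PPA}) together with assumptions (i) and (ii).

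The first step is to read the normal cone inclusion off (\ref{PA}). Dividing by $\gamma_n$, the inequality
$$\langle x_n + e_n - x_{n+1} - \gamma_n F(x_{n+1}), y - x_{n+1}\rangle \le 0 \quad \text{for all } y\in X$$
is equivalent to saying
$$-F(x_{n+1}) + \frac{x_n - x_{n+1} + e_n}{\gamma_n} \in N_X(x_{n+1}).$$
By the polarity between $T_X(x_{n+1})$ and $N_X(x_{n+1})$, pairing this element with any $v \in T_X(x_{n+1})$ with $\|v\|\le 1$ yields
$$\langle -F(x_{n+1}), v\rangle \le \left\langle \frac{x_{n+1} - x_n - e_n}{\gamma_n}, v\right\rangle \le \frac{\|x_{n+1} - x_n\| + \|e_n\|}{\gamma_n}.$$

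Next, I would invoke the lemma of Matsushita and Xu to identify the left-hand supremum over such $v$ with $\|P_{T_X(x_{n+1})}(-F(x_{n+1}))\|$, giving the clean bound
$$\bigl\| P_{T_X(x_{n+1})}(-F(x_{n+1}))\bigr\| \le \frac{\|x_{n+1} - x_n\| + \|e_n\|}{\gamma_n}.$$
Assumption (i) forces $\|x_{n+1}-x_n\|\to 0$, assumption (ii) forces $\|e_n\|\to 0$, and $\liminf \gamma_n > 0$ keeps the denominator bounded away from zero. Hence the right-hand side tends to $0$, so $P_{T_X(x_{n+1})}(-F(x_{n+1}))\to 0$. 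Applying Theorem \ref{TH3} to the shifted sequence $\{x_{n+1}\}$ (which has the same tail as $\{x_n\}$) concludes that $x_n\in X^*$ for all sufficiently large $n$.

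The argument is essentially routine once the reformulation of (\ref{PA}) as a normal-cone inclusion is in place; the only mild subtlety is the bookkeeping between indices $n$ and $n+1$, and making sure the hypothesis $\liminf \gamma_n > 0$ is used exactly where needed to control the denominator. No further hypothesis on $\{\gamma_n\}$ (e.g.\ boundedness above) is required for this step.
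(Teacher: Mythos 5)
Your proof is correct, but it takes a different route from the paper's. The paper proves Theorem \ref{TH4} directly: it assumes the conclusion fails, extracts a subsequence outside $X^*$, applies the error bound of Theorem \ref{TH2}~(a) at $x_{n_i+1}$ together with the estimate (\ref{T101}) (obtained by putting $y=P_{X^*}(x_{n_i+1})$ in (\ref{PA})), cancels $\|x_{n_i+1}-P_{X^*}(x_{n_i+1})\|$, and arrives at $\alpha\le\frac{1}{\gamma_{n_i}}(\|x_{n_i}-x_{n_i+1}\|+\|e_{n_i}\|)\to 0$, a contradiction. You instead verify the hypothesis (\ref{T12}) of Theorem \ref{TH3} for the shifted sequence $\{x_{n+1}\}\subset X$, by rewriting (\ref{PA}) as the normal-cone inclusion $-F(x_{n+1})+\frac{x_n-x_{n+1}+e_n}{\gamma_n}\in N_X(x_{n+1})$, pairing with unit tangent vectors, and invoking the Matsushita--Xu lemma \cite{MX13} to get $\|P_{T_X(x_{n+1})}(-F(x_{n+1}))\|\le\frac{\|x_{n+1}-x_n\|+\|e_n\|}{\gamma_n}\to 0$; then Theorem \ref{TH3} finishes. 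The quantitative content is identical (both hinge on the bound $(\|x_{n+1}-x_n\|+\|e_n\|)/\gamma_n$), but your version is more modular: it delegates the weak-sharpness/contradiction machinery entirely to Theorem \ref{TH3} rather than re-running it, at the cost of needing the tangent-cone projection lemma. It also quietly sidesteps a small awkwardness in the paper's argument, namely that the cancellation there requires $x_{n_i+1}\notin X^*$ while the subsequence was only chosen with $x_{n_i}\notin X^*$. One bookkeeping point you handled correctly and should keep explicit: Theorem \ref{TH3} requires the sequence to lie in $X$, which holds for $\{x_{n+1}\}_{n\ge 1}$ since each term is a projection onto $X$, even though $x_1$ is only assumed to be in $H$.
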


\begin{proof}
Since
$$\left\langle x_n + e_n  - x_{n+1} - \gamma_n F(x_{n+1}), y - x_{n+1} \right\rangle \le 0,
\quad \text{for all } y\in X,$$
we have, for all $y \in X$, that
\begin{eqnarray}\label{T101}
\left\langle F(x_{n+1}), x_{n+1} - y \right\rangle
&\le& \frac{1}{\gamma_n} \left( \langle x_n - x_{n+1},x_{n+1}-y\rangle
+ \langle e_{n}, x_{n+1}-y \rangle \right)\nonumber\\
&\le& \frac{1}{\gamma_n} \left( \|x_n - x_{n+1}\| \, \|x_{n+1} - y\| + \|e_{n}\| \, \|x_{n+1}-y\| \right).
\end{eqnarray}
Assume that the conclusion is false. Then, there exists a subsequence $\{x_{n_i}\}$ of $\{x_n\}$
such that $x_{n_i}\not\in X^*$ for all $i$. For each $i$, set $y_{n_i} = P_{X^*}(x_{n_i})$.
Then  $x_{n_i}\ne y_{n_i}$ for all $i$. By Theorem \ref{TH2} (a), for some $\alpha >0$, we have
\begin{equation}\label{T111}
\left\langle F(x_{n_i}), x_{n_i}-y_{n_i} \right\rangle
\ge \alpha \, \mathrm{dist}(x_{n_i},X^*) = \alpha \, \|x_{n_i} - y_{n_i}\|.
\end{equation}
Taking $n: = n_i$ and $y: = y_{n_i+1}$ in (\ref{T101}) and using (\ref{T111}), we get
$$\alpha \|x_{n_i+1} - y_{n_i+1}\| \le \frac{1}{\gamma_{n_i}}
\left( \|x_{n_i}-x_{n_i+1}\| \, \|x_{n_i+1}-y_{n_i+1}\|
+ \|e_{n_i}\| \, \|x_{n_i+1}-y_{n_i+1}\| \right),$$
and then
$$\alpha \le \frac{1}{\gamma_{n_i}}  \left( \|x_{n_i}-x_{n_i+1}\| + \|e_{n_i}\| \right).$$
Since $\liminf_{n \to \infty} \gamma_n >0$, $\lim_{n\to\infty} e_n = 0$ and
$x_{n+1}-x_n \to 0$ as $n\to \infty$,
it follows from the latter inequality that $\alpha \le 0$ which contradicts the fact that $\alpha > 0$.
Therefore, $x_n\in X^*$ for all sufficiently large $n$.

\end{proof}

\begin{Remark}
Theorem \ref{TH4} is an improvement of Theorem 3.1 in \cite{MX15} because we do not require
that $\{P_{X^*}(x_k)\}$ converges strongly to some point in $X^*$.
\end{Remark}

\subsection{Exact Proximal Point Method}

We now study the finite termination propery for the  exact proximal point method:
\begin{equation}\label{T112}
x_{n+1} = P_X \left( x_n - \gamma_n F(x_{n+1}) \right), \quad n = 1,2,\ldots
\end{equation}
This is the case in (\ref{PPA}) we take  $e_n = 0,\, n = 1,2,\ldots$.
\begin{Theorem}\label{TH5}
Let $F : X \to H$ be monotone and $X^*$ be weakly sharp with modulus $\alpha>0$.
Let $\{x_n\}$ be the sequence generated by (\ref{T112}) with, for some positive number $a$, $\gamma_n \in [a,+\infty)$ for all $n$.
Then, $\{x_n\}$ converges to a point in $X^*$ in atmost $\ell$ iterations with
$$\ell \le \frac{\mathrm{dist(x_1,X^*)^2}}{a^2\alpha^2} + 1.$$
\end{Theorem}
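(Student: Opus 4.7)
The plan is to establish a quantitative Fejér-type inequality showing that $\mathrm{dist}(x_n, X^*)^2$ decreases by at least a fixed positive amount $a^2\alpha^2$ at each step before termination, and then read off the iteration count. The two ingredients are the standard projection variational inequality arising from \eqref{T112} and the weak-sharpness error bound of Theorem \ref{TH2}(a), which is available because $F$ is monotone and $X^*$ is weakly sharp.

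First I would rewrite \eqref{T112} as the variational inequality
$$\langle x_n - \gamma_n F(x_{n+1}) - x_{n+1}, y - x_{n+1}\rangle \le 0, \qquad y \in X,$$
and then derive the Fejér-type bound
$$\|x_{n+1}-y_n\|^2 \le \|x_n-y_n\|^2 - \|x_{n+1}-x_n\|^2 - 2\gamma_n\langle F(x_{n+1}), x_{n+1}-y_n\rangle$$
where $y_n := P_{X^*}(x_n)$, either by expanding the projection inequality $\|P_X(u)-y\|^2+\|P_X(u)-u\|^2 \le \|u-y\|^2$ with $u=x_n-\gamma_n F(x_{n+1})$, $y=y_n$, or by direct algebra. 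Since $y_n\in X^*$ gives $\langle F(y_n), x_{n+1}-y_n\rangle \ge 0$, monotonicity of $F$ yields $\langle F(x_{n+1}), x_{n+1}-y_n\rangle \ge 0$, so the last term is non-positive and can be dropped. Using $\mathrm{dist}(x_{n+1},X^*)\le \|x_{n+1}-y_n\|$, this produces
$$\mathrm{dist}(x_{n+1},X^*)^2 \le \mathrm{dist}(x_n,X^*)^2 - \|x_{n+1}-x_n\|^2.$$

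Next I would get a lower bound on the step length whenever termination has not yet occurred. Suppose $x_{n+1}\notin X^*$ and let $y_{n+1}=P_{X^*}(x_{n+1})$. Setting $y=y_{n+1}$ in the projection inequality gives
$$\langle x_n-x_{n+1}, y_{n+1}-x_{n+1}\rangle \le -\gamma_n\langle F(x_{n+1}), x_{n+1}-y_{n+1}\rangle.$$
By Theorem \ref{TH2}(a) (weak sharpness plus monotonicity), $\langle F(x_{n+1}), x_{n+1}-y_{n+1}\rangle \ge \alpha\|x_{n+1}-y_{n+1}\|$. Applying Cauchy--Schwarz on the left and dividing by the positive quantity $\|x_{n+1}-y_{n+1}\|$ yields
$$\|x_{n+1}-x_n\| \ge \gamma_n\alpha \ge a\alpha.$$
Combined with the previous inequality, this gives the key decrement: whenever $x_{n+1}\notin X^*$,
$$\mathrm{dist}(x_{n+1},X^*)^2 \le \mathrm{dist}(x_n,X^*)^2 - a^2\alpha^2.$$

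Finally I would iterate this bound. If $x_k\notin X^*$ for $k=2,\dots,\ell$, then
$$0 \le \mathrm{dist}(x_\ell,X^*)^2 \le \mathrm{dist}(x_1,X^*)^2 - (\ell-1)a^2\alpha^2,$$
which forces $\ell \le \mathrm{dist}(x_1,X^*)^2/(a^2\alpha^2) + 1$. Consequently some $x_k$ with $k\le \ell$ lies in $X^*$, and since $x_k\in X^*$ is a fixed point of the iteration $x\mapsto P_X(x-\gamma_k F(x))$, the sequence remains at $x_k\in X^*$ thereafter. The main technical point to get right is the sign bookkeeping leading to $\|x_{n+1}-x_n\|\ge \gamma_n\alpha$; the Fejér step itself is a routine expansion, and the iteration count then follows immediately.
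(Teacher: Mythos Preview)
Your argument is correct and follows essentially the same route as the paper: both combine the Fej\'er-type decrease $\|x_{n+1}-x^*\|^2\le\|x_n-x^*\|^2-\|x_{n+1}-x_n\|^2$ with the weak-sharpness bound of Theorem~\ref{TH2}(a) to force $\|x_{n+1}-x_n\|\ge a\alpha$ whenever $x_{n+1}\notin X^*$, and then read off the iteration count by summing. Your version is slightly more streamlined---you work directly with $\mathrm{dist}(x_n,X^*)$ via $y_n=P_{X^*}(x_n)$ and prove the contrapositive immediately, whereas the paper fixes an $x^*\in X^*$, cites Rockafellar for (\ref{T113}), first passes through Theorem~\ref{TH4} to get $\|x_{n+1}-x_n\|\to 0$, and then defines $\ell$ as the first index where $\|x_{\ell+1}-x_\ell\|<a\alpha$; one small point is that your closing ``fixed point of $x\mapsto P_X(x-\gamma_kF(x))$'' remark would be cleaner phrased via the resolvent (if $x_k\in X^*$ then $0\in T(x_k)$, so $x_{k+1}=J_{\gamma_k}(x_k)=x_k$), since the iteration \eqref{T112} is implicit.
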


\begin{proof}
From Rockafellar \cite{R76}, we know that for $x^*\in X^*$
\begin{equation}\label{T113}
\|x_{n+1}-x^*\|^2 \le \|x_n - x^*\|^2 - \|x_{n+1}-x_{n}\|^2, \quad n=1,2, \ldots .
\end{equation}
This implies that $\lim_{n\to \infty} \|x_n-x^*\|$ exists
and $\lim_{n\to\infty}\|x_{n+1}-x_n\| = 0$. By Theorem \ref{TH4}, $x_n\in X^*$ for all sufficiently large $n$.

Now, for $1<N \in \mathbb{N}$, we have from (\ref{T113}) that
\begin{eqnarray*}
\|x_1-x^*\|^2
&\ge& \|x_2-x^*\|^2 + \|x_2-x_1\|^2\\
&\ge& \|x_3-x^*\|^2 + \|x_3-x_2\|^2 + \|x_2-x_1\|^2\\
&\vdots& \\
&\ge& \|x_{N+1}-x^*\|^2 + \sum_{i=1}^{N} \|x_{i+1}-x_i\|^2\\
&\ge&  \sum_{i=1}^{N} \|x_{i+1}-x_i\|^2.
\end{eqnarray*}
Therefore, for all $N>1$,
\begin{equation}\label{T114}
\mathrm{dist} \left( x_1,X^* \right)^2 = \inf_{x^*\in X^*} ||x_1 - x^*||^2 \ge \sum_{i=1}^{N} ||x_{i+1}-x_i||^2.
\end{equation}
Since $\lim_{n\to \infty} \|x_{n+1}-x_{n}\| = 0$,
we let $\ell$ be the smallest integer such that $||x_{\ell+1}-x_\ell|| < a \, \alpha$.

If $x_{\ell+1} \not\in X^*$, then we set $y_{\ell+1} = P_{X^*}(x_{\ell+1})$.
By Theorem \ref{TH2} (a) and (\ref{PA}), we have
\begin{eqnarray*}
\alpha \|x_{\ell+1} - y_{\ell+1}\|
&=& \alpha \, \mathrm{dist} \left( x_{\ell+1},X^* \right) \\
&\le& \left\langle F(x_{\ell+1}),x_{\ell+1}-y_{\ell+1} \right\rangle\\
&\le& \frac{1}{\gamma_{\ell}} \langle x_\ell - x_{\ell+1},x_{\ell+1} - y_{\ell+1}\rangle\\
&\le& \frac{1}{\gamma_{\ell}} \|x_\ell - x_{\ell+1}\| \, \|x_{\ell+1} - y_{\ell+1}\|.
\end{eqnarray*}
This implies that $\gamma_{\ell} \, \alpha \le \|x_{\ell+1}-x_{\ell}\| < a \, \alpha$,
or, equivalently, $\gamma_{\ell} < a$ which contradicts our assumption that
$\gamma_{\ell} \geq a$.
Thus, $x_{\ell+1} \in X^*$. Hence, we have
$$\mathrm{dist} \left( x_1,X^* \right)^2 \ge
\sum_{i=1}^{\ell-1} \|x_{i+1}-x_i\|^2 \ge (\ell-1)a^2 \, \alpha^2.$$
Therefore,
$$\ell \le \frac{\mathrm{dist} \left( x_1,X^* \right)^2}{a^2\alpha^2} +1.$$

\end{proof}

\begin{Remark}
If follows from Theorem \ref{TH5} that if $\gamma_1$ is chosen to be sufficiently large, i.e.,
$a$ sufficiently large, then the exact proximal point algorithm has one step termination.
Hence, Theorem \ref{TH5} is an extension of Theorem 4.3 in \cite{XZ05}.
\end{Remark}

\subsection{Gradient Projection Method}

We now establish the finite termination property of the following gradient projection method:
\begin{equation}\label{GPM}
x_{n+1} = P_X(x_n - \gamma_nF(x_n)),\qquad n = 1,2,\ldots .
\end{equation}
This is the case when we take $e_n:= \gamma_n(F(x_{n+1})-F(x_n))$, $n=1,2,\ldots$ in (\ref{PPA}).

The finite termination property for the method (\ref{GPM}) was studied in \cite{MX15,XZ02}.
Xiu and Zhang \cite{XZ02}  established the finite termination of a modification of (\ref{GPM})
by adding a vanishing error term under the nondegeneracy assumption. To obtain their results,
they also assumed the strongly convergence of the sequence generated by the method.
Matsushita and Xu \cite{MX15} dropped the assumption that the sequence converges strongly and
obtained the finite termination property of (\ref{GPM}) in Hilbert spaces under the assumption that
$F$ is inverse strongly monotone. Here, we study the finite termination property of (\ref{GPM})
under the assumption that $F$ is strongly pseudomonotone and monotone at the same time.

Assume $F$ is strongly pseudomonotone. Then the VIP has unique solution, say $x^*$.
If, in addition, $F$ is monotone and $X^* = \{x^*\}$ is weakly sharp with modulus $\alpha$, then by Theorem \ref{TH2} (a), we have
$$\alpha \|x-x^*\| \le \langle F(x), x-x^*\rangle,\quad \mbox{for all } x\in X.$$

\begin{Lemma}\label{LM1}
Let $F : X \to H$ be strongly pseudomonotone with the modulus $\mu$ and Lipschitz continuous with the constant $L$.
Let $\{x_n\}$ be the sequence generated by (\ref{GPM}).
If $x^*$ is a unique solution of the VIP, then
\begin{equation}\label{T115}
\left[ 1+\gamma_n(2\mu - \gamma_nL^2) \right] \, \|x_{n+1}-x^*\|^2 \le \|x_n-x^*\|^2,
\,\, \mbox{for all} \,\,n= 1,2,\ldots
\end{equation}
and
\begin{equation}\label{T116}
\left( 1-\gamma_n^2 \right) \|x_{n+1}-x_n\|^2 \le
\left( L^2-2\gamma_n\mu-1 \right) \, \|x_{n+1}-x^*\|^2 + \|x_n-x^*\|^2,
\,\, \mbox{for all }\,\, n= 1,2,\ldots
\end{equation}
\end{Lemma}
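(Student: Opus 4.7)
My plan is to derive both inequalities simultaneously from a single ``master'' estimate produced by the projection onto $X$, and then to split the resulting cross term by two different applications of Young's inequality.

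First I would exploit the standard projection identity
\[
\|P_X(u)-y\|^2+\|P_X(u)-u\|^2\le\|u-y\|^2\quad \text{for all } y\in X,
\]
applied with $u=x_n-\gamma_n F(x_n)$ and $y=x^*$; note that $P_X(u)=x_{n+1}$ by definition of (\ref{GPM}). Expanding all three norms and cancelling the common term $\gamma_n^2\|F(x_n)\|^2$ on both sides produces the master inequality
\[
\|x_{n+1}-x^*\|^2+\|x_{n+1}-x_n\|^2+2\gamma_n\langle F(x_n),x_{n+1}-x^*\rangle\le\|x_n-x^*\|^2.
\]

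Next I would produce a lower bound on $\langle F(x_n),x_{n+1}-x^*\rangle$. The crucial observation is that strong pseudomonotonicity must be invoked at the pair $(x^*,x_{n+1})$, not at $x_n$: since $x_{n+1}\in X$, the VIP gives $\langle F(x^*),x_{n+1}-x^*\rangle\ge 0$, so strong pseudomonotonicity with modulus $\mu$ yields $\langle F(x_{n+1}),x_{n+1}-x^*\rangle\ge\mu\|x_{n+1}-x^*\|^2$. Writing
\[
\langle F(x_n),x_{n+1}-x^*\rangle=\langle F(x_{n+1}),x_{n+1}-x^*\rangle+\langle F(x_n)-F(x_{n+1}),x_{n+1}-x^*\rangle
\]
and bounding the second inner product below by $-L\|x_n-x_{n+1}\|\,\|x_{n+1}-x^*\|$ using Cauchy--Schwarz together with Lipschitz continuity, I would substitute the resulting bound back into the master inequality to get
\[
(1+2\gamma_n\mu)\|x_{n+1}-x^*\|^2+\|x_{n+1}-x_n\|^2\le\|x_n-x^*\|^2+2\gamma_n L\|x_n-x_{n+1}\|\,\|x_{n+1}-x^*\|.
\]

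Finally, I would dispose of the cross term by Young's inequality $2ab\le a^2+b^2$ in two distinct pairings. For (\ref{T115}) I take $a=\gamma_n L\|x_{n+1}-x^*\|$ and $b=\|x_n-x_{n+1}\|$, giving $2\gamma_n L\|x_n-x_{n+1}\|\|x_{n+1}-x^*\|\le\gamma_n^2 L^2\|x_{n+1}-x^*\|^2+\|x_n-x_{n+1}\|^2$; the $\|x_n-x_{n+1}\|^2$ terms then cancel, and the coefficient of $\|x_{n+1}-x^*\|^2$ becomes $1+2\gamma_n\mu-\gamma_n^2 L^2=1+\gamma_n(2\mu-\gamma_nL^2)$. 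For (\ref{T116}) I take instead $a=L\|x_{n+1}-x^*\|$ and $b=\gamma_n\|x_n-x_{n+1}\|$, giving $2\gamma_n L\|x_n-x_{n+1}\|\|x_{n+1}-x^*\|\le L^2\|x_{n+1}-x^*\|^2+\gamma_n^2\|x_n-x_{n+1}\|^2$; collecting terms then produces exactly (\ref{T116}).

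The main conceptual obstacle is recognising at the outset that strong pseudomonotonicity should be applied at $x_{n+1}$ rather than $x_n$, so that the modulus $\mu$ attaches to $\|x_{n+1}-x^*\|^2$ and matches the left-hand coefficients of both target inequalities; the discrepancy $F(x_n)-F(x_{n+1})$ is then absorbed by the Lipschitz constant and handled cleanly by Young's inequality. Everything else is routine expansion of the projection identity.
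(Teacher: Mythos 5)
Your proof is correct, and for (\ref{T116}) it is essentially the paper's argument in different clothing: your ``master inequality'' is exactly what the paper obtains by combining its inequality (\ref{T117}) (the variational characterization of the projection, taken from (\ref{PA})) with the polarization identity (\ref{T119}), and your subsequent steps --- invoking strong pseudomonotonicity at the pair $(x^*,x_{n+1})$, absorbing $F(x_n)-F(x_{n+1})$ via Lipschitz continuity and Cauchy--Schwarz, and then applying Young's inequality with the pairing $a=L\|x_{n+1}-x^*\|$, $b=\gamma_n\|x_n-x_{n+1}\|$ --- reproduce the paper's estimate (\ref{T118}) term for term. Where you genuinely add something is (\ref{T115}): the paper does not prove it at all, simply citing \cite{KV14}, whereas you derive it from the same master inequality by the alternative Young pairing $a=\gamma_nL\|x_{n+1}-x^*\|$, $b=\|x_n-x_{n+1}\|$, which cancels the $\|x_{n+1}-x_n\|^2$ terms and yields the coefficient $1+\gamma_n(2\mu-\gamma_nL^2)$ exactly. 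This gives a self-contained, unified proof of both inequalities from one estimate, which is tidier than the paper's presentation; the content is otherwise the same.
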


\begin{proof}
We only prove (\ref{T116}). For the proof of (\ref{T115}), see \cite{KV14}.

Since $e_n = \gamma_n(F(x_{n+1})-F(x_n))$, it follows from (\ref{PA}) that
$$ \left\langle x_n - \gamma_nF(x_n) - x_{n+1}, y-x_{n+1} \right\rangle \le 0,\quad \mbox{for all } y\in X.$$
Taking $y=x^*$ in the latter inequality, one obtains
\begin{equation}\label{T117}
\left\langle x_n - x_{n+1},x^* - x_{n+1} \right\rangle \le
\gamma_n \left\langle F(x_n),x^*-x_{n+1} \right\rangle.
\end{equation}
Since $x^*\in X^*$, we have
$$\left\langle F(x^*),x-x^* \right\rangle \ge 0, \quad \mbox{for all } x\in X.$$
Hence, by the strong pseudomonotonicity of $F$, we get
$$\langle F(x),x-x^*\rangle \ge \mu||x-x^*||^2, \quad \mbox{for all } x\in X.$$
By the Cauchy-Schwarz inequality and the Lipschitz continuity of $F$, we obtain
\begin{eqnarray}\label{T118}
\gamma_n\langle F(x_n),x^*-x_{n+1}\rangle
&=& -\gamma_n \langle F(x_{n+1}),x^*-x_{n+1}\rangle +\gamma_n\langle F(x_n)-F(x_{n+1}),x^*-x_{n+1}\rangle \nonumber\\
&\le& -\gamma_n\mu \|x_{n+1}-x^*\|^2 +\gamma_n \|F(x_n)-F(x_{n+1})\| \, \|x^*-x_{n+1}\| \nonumber\\
&\le& -\gamma_n\mu \|x_{n+1}-x^*\|^2 +\gamma_n L \|x_n-x_{n+1}\| \, \|x^*-x_{n+1}\| \nonumber\\
&\le& -\gamma_n\mu \|x_{n+1}-x^*\|^2 +\frac{1}{2} \gamma_n^2 \|x_n-x_{n+1}\|^2
+ \frac{1}{2} L^2 \|x^*-x_{n+1}\|^2 \nonumber\\
&=& \frac{1}{2} \left[(L^2-2\gamma_n\mu)\|x_{n+1}-x^*\|^2 + \gamma_n^2 \|x_n-x_{n+1}\|^2 \right].
\end{eqnarray}
On the other hand,
\begin{equation}\label{T119}
\langle x_n - x_{n+1},x^* - x_{n+1}\rangle =
\frac{1}{2} \left( \|x_{n+1}-x_n\|^2 + \|x_{n+1}-x^*\|^2 - \|x_n-x^*\|^2 \right).
\end{equation}
From (\ref{T117}) - (\ref{T119}), we have
$$\|x_{n+1}-x_n\|^2 + \|x_{n+1}-x^*\|^2 - \|x_n-x^*\|^2
\le (L^2-2\gamma_n\mu)\|x_{n+1}-x^*\|^2 +\gamma_n^2||x_n-x_{n+1}||^2.$$
Then, we obtain (\ref{T116}).

\end{proof}

\begin{Theorem}\label{TH6}
Let $F : X \to H$ be strongly pseudomonotone with the modulus $\mu$ and Lipschitz with the constant $L$.
Let $\{x_n\}$ be the sequence generated by (\ref{GPM}). Suppose that
\begin{equation}\label{T120}
\frac{L^2}{2\mu} \le \gamma_n\le \sigma <1, \quad \mbox{for all } n = 1,2,\ldots
\end{equation}
where $\sigma$ is a positive constant.
Assume further that $F$ is monotone and $X^*= \{x^*\}$ is weakly sharp with modulus $\alpha$.
Then, $\{x_n\}$ converges to $x^*$ in atmost $\ell$ iterates with
$$\ell \le \frac{\|x_1-x^*\|^2(2\mu+L^3)^2}{(1-\sigma^2)\alpha^2L^4}+1.$$
\end{Theorem}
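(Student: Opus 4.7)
The plan is to mimic the two-ingredient structure of Theorem~\ref{TH5}: first establish a telescoping sum that forces $\|x_{n+1}-x_n\|\to 0$, and then use weak sharpness together with the GPM update to show that if $x_{n+1}\neq x^*$ then $\|x_{n+1}-x_n\|$ is bounded below by an explicit positive constant. Combining the two yields an explicit bound on the number of distinct iterates.

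First I would derive the telescoping inequality. From Lemma~\ref{LM1}, inequality (\ref{T116}) reads
\[
(1-\gamma_n^2)\|x_{n+1}-x_n\|^2 \le (L^2-2\gamma_n\mu-1)\|x_{n+1}-x^*\|^2+\|x_n-x^*\|^2 .
\]
The stepsize condition $\gamma_n\ge L^2/(2\mu)$ makes $L^2-2\gamma_n\mu\le 0$, and $\gamma_n\le\sigma<1$ gives $1-\gamma_n^2\ge 1-\sigma^2>0$; therefore
\[
(1-\sigma^2)\|x_{n+1}-x_n\|^2 \le \|x_n-x^*\|^2-\|x_{n+1}-x^*\|^2 .
\]
Telescoping this from $n=1$ to $n=N$ gives $(1-\sigma^2)\sum_{i=1}^{N}\|x_{i+1}-x_i\|^2 \le \|x_1-x^*\|^2$, which in particular shows $\|x_{n+1}-x_n\|\to 0$.

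Next I would bound $\|x_{n+1}-x_n\|$ from below whenever $x_{n+1}\neq x^*$. Taking $y=x^*$ in (\ref{PA}) with $e_n=\gamma_n(F(x_{n+1})-F(x_n))$ yields
\[
\gamma_n\langle F(x_n),x_{n+1}-x^*\rangle \le \langle x_n-x_{n+1},x_{n+1}-x^*\rangle .
\]
Writing $F(x_{n+1})=F(x_n)+(F(x_{n+1})-F(x_n))$ and invoking the Lipschitz constant $L$, one obtains
\[
\langle F(x_{n+1}),x_{n+1}-x^*\rangle \le \Bigl(\tfrac{1}{\gamma_n}+L\Bigr)\|x_n-x_{n+1}\|\,\|x_{n+1}-x^*\| .
\]
Under the present assumptions $F$ is monotone and $X^*=\{x^*\}$ is weakly sharp with modulus $\alpha$, so Theorem~\ref{TH2}(a) gives $\alpha\|x_{n+1}-x^*\|\le \langle F(x_{n+1}),x_{n+1}-x^*\rangle$. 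Dividing by $\|x_{n+1}-x^*\|>0$ and using $1/\gamma_n\le 2\mu/L^2$ leads to
\[
\|x_{n+1}-x_n\| \ge \frac{\alpha L^2}{2\mu+L^3}.
\]

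Finally, let $\ell$ be the smallest index with $x_{\ell+1}=x^*$; such $\ell$ exists because otherwise the above lower bound would contradict $\|x_{n+1}-x_n\|\to 0$. For every $1\le i\le \ell-1$ we have $x_{i+1}\neq x^*$, hence $\|x_{i+1}-x_i\|\ge \alpha L^2/(2\mu+L^3)$. Plugging this into the telescoping estimate,
\[
(1-\sigma^2)(\ell-1)\left(\frac{\alpha L^2}{2\mu+L^3}\right)^{\!2} \le \|x_1-x^*\|^2,
\]
which rearranges to the claimed bound $\ell\le \|x_1-x^*\|^2(2\mu+L^3)^2/[(1-\sigma^2)\alpha^2 L^4]+1$. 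Once $x_{\ell+1}=x^*$, inequality (\ref{T115}) forces $x_n=x^*$ for all $n\ge \ell+1$, so the sequence actually terminates at $x^*$. The main technical subtlety I expect is exactly the mismatch between the operator value $F(x_n)$ appearing in the GPM update and the value $F(x_{n+1})$ needed to apply weak sharpness at $x_{n+1}$; this is resolved by the Lipschitz splitting above, which is precisely why the factor $(2\mu+L^3)/L^2$ (rather than a pure proximal factor $1/a$) appears in the final estimate.
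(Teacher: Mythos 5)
Your proposal is correct and follows essentially the same route as the paper: the telescoping estimate $(1-\sigma^2)\|x_{n+1}-x_n\|^2\le\|x_n-x^*\|^2-\|x_{n+1}-x^*\|^2$ from (\ref{T116}) and (\ref{T120}), combined with the lower bound $\|x_{n+1}-x_n\|\ge\alpha L^2/(2\mu+L^3)$ obtained from weak sharpness and the Lipschitz splitting of $F(x_{n+1})-F(x_n)$. The only (cosmetic) difference is that you define $\ell$ as the first index with $x_{\ell+1}=x^*$, whereas the paper defines it as the first index where $\|x_{\ell+1}-x_\ell\|$ drops below the threshold and then deduces $x_{\ell+1}=x^*$ by contradiction; both yield the identical bound.
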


\begin{proof}
It follows from (\ref{T120}) that $2\mu - \gamma_n L^2 \ge 0$ for all $n$.
Hence, (\ref{T115}) implies
$$\|x_{n+1} - x^*\| \le \|x_n-x^*\|, \quad \mbox{for all } n,$$
and then $\lim_{n\to\infty}\|x_n - x^*\| = a\in [0,\infty)$.
By (\ref{T120}) and (\ref{T116}), we have
\begin{equation}\label{T121}
(1-\sigma^2)\|x_{n+1}-x_n\|^2 \le \|x_n-x^*\|^2 - \|x_{n+1}-x^*\|^2, \quad \mbox{for all } n.
\end{equation}
Using (\ref{T121}) and progressing as in the proof of Theorem \ref{TH5}, we obtain
\begin{equation}\label{T122}
\|x_1-x^*\|^2 \ge \left( 1-\sigma^2 \right) \sum_{i=1}^{N}\|x_{i+1}-x_i\|^2,\quad \text{for all } N>1.
\end{equation}
Moreover, letting $n\to \infty$ in the both sides of (\ref{T121}), we get
$$\lim_{n\to\infty}\|x_{n+1}-x_n\| = 0.$$
Let $\ell$ be the smallest integer such that
\begin{equation}\label{T123}
\| x_{\ell+1}-x_\ell \| < \frac{L^2\alpha}{2\mu+L^3}.
\end{equation}
If $x_{\ell+1}\ne x^*$, then by the weak sharpness of the solution set $X^{*}$,
the Lipschitz continuity of $F$ and (\ref{T101}), we have
\begin{eqnarray*}
\alpha\|x_{\ell+1}-x^*\|
&\le& \langle F(x_{\ell+1}), x_{\ell+1}-x^*\rangle\\
&\le& \frac{1}{\gamma_{\ell}}(\|x_\ell - x_{\ell+1}\| \, \|x_{\ell+1}-x^*\| + \|e_\ell \| \, \|x_{\ell+1}-x^*\|)\\
&=&\frac{1}{\gamma_{\ell}}(\|x_\ell - x_{\ell+1}\| \, \|x_{\ell+1}-x^*\|
+ \gamma_{\ell} \|F(x_\ell) - F(x_{\ell+1}) \| \, \|x_{\ell+1}-x^*\|)\\
&\le& \left(\frac{1}{\gamma_\ell} +  L\right) \|x_\ell - x_{\ell+1}\| \, \|x_{\ell+1}-x^*\|\\
&\le& \left(\frac{2\mu}{L^2} + L\right) \|x_\ell - x_{\ell+1}\| \, \|x_{\ell+1}-x^*\|
\end{eqnarray*}
This implies
$$\|x_{\ell+1}-x_\ell \| \ge \frac{\alpha L^2}{2\mu + L^3},$$
which contradicts (\ref{T123}).
Thus, $x_{\ell+1} = x^*$.
It follows from (\ref{T121}) that
$$\|x_1-x^*\|^2 \ge (1-\sigma^2)\sum_{i=1}^{\ell-1} \|x_{i+1}-x_i\|^2
\ge (1-\sigma^2)(\ell-1) \frac{\alpha^2L^4}{(2\mu + L^3)^2},$$
and so,
$$\ell \le \frac{\|x_1-x^*\|^2(2\mu+L^3)^2}{(1-\sigma^2)\alpha^2L^4}+1.$$
\end{proof}

%AAAAAAAAAAAAAAAAAAAAAAAAAAAAAAAAAAAAAAAAAAAAAAAAAAAAAAAAAAAAAAAAAAAAAAAAAAAAAAAAAAAAAAAAAAA
%AAAAAAAAAAAAAAAAAAAAAAAAAAAAAAAAAAAAAAAAAAAAAAAAAAAAAAAAAAAAAAAAAAAAAAAAAAAAAAAAAAAAAAAAAAA

\textbf{Acknowledgements.}
In this research, first author was funded by the National Plan for Science, Technology and
Innovation (MAARIFAH) - King Abdulaziz City for Science and Technology - through
the Science \& Technology Unit at King Fahd University of Petroleum \& Minerals (KFUPM) - the
Kingdom of Saudi Arabia, award number 12-MAT3023-24, Luong V. Nguyen was supported by funds allocated to the implementation of the international co-funded project in the years 2014-2018, 3038/7.PR/2014/2, and by the EU grant PCOFUND-GA-2012-600415. The paper was completed while the third author was visiting Mathematics \& Statistics Department,  KFUPM. He thanks the Department for hospitality.

%RRRRRRRRRRRRRRRRRRRRRRRRRRRRRRRRRRRRRRRRRRRRRRRRRRRRRRRRRRRRRRRRRRRRRRRRRRRRRRRRRRRRR
%RRRRRRRRRRRRRRRRRRRRRRRRRRRRRRRRRRRRRRRRRRRRRRRRRRRRRRRRRRRRRRRRRRRRRRRRRRRRRRRRRRRRR

\end{document}